\documentclass[reqno,12pt]{amsart}

\usepackage{amsmath, epsfig, cite}
\usepackage{amssymb}
\usepackage{amsfonts}
\usepackage{latexsym}
\usepackage{amsthm}

\newtheorem{theorem}{Theorem}

\newtheorem{conjecture}{Conjecture}
\newtheorem{lemma}{Lemma}

\theoremstyle{remark}

\setlength{\textwidth}{160.0mm} \setlength{\oddsidemargin}{0mm}
\setlength{\evensidemargin}{0mm} \addtolength{\topmargin}{-.1cm}
\addtolength{\textheight}{.2cm}

\numberwithin{equation}{section}

\allowdisplaybreaks[1]

\author{Victor J.\ W.\ Guo}
\address{School of Mathematics and Statistics, Huaiyin Normal University,
Huai'an 223300, Jiangsu, People's Republic of China}
\email{jwguo@hytc.edu.cn}
\thanks{The first author was partially supported by the National Natural
Science Foundation of China (grant 11771175).}

\author{Michael J.\ Schlosser}
\address{Fakult\"at f\"ur Mathematik, Universit\"at Wien,
Oskar-Morgenstern-Platz~1, A-1090 Vienna, Austria}
\email{michael.schlosser@univie.ac.at}
\thanks{The second author was partially supported by FWF Austrian Science Fund
grant P 32305.}

\title[A family of $q$-hypergeometric congruences modulo $\Phi_n(q)^4$]
{A family of $q$-hypergeometric congruences modulo\\[2pt]the fourth power
of a cyclotomic polynomial}

\subjclass[2010]{Primary 33D15; Secondary 11A07, 11B65}

\keywords{basic hypergeometric series; supercongruences; $q$-congruences;
cyclotomic polynomial; Andrews' transformation, Gasper's summation.}

\begin{document}

\begin{abstract}
We prove a two-parameter family of $q$-hypergeometric congruences modulo
the fourth power of a cyclotomic polynomial. Crucial ingredients in our proof
are George Andrews'
multiseries extension of the Watson transformation, and a Karlsson--Minton type summation for very-well-poised basic hypergeometric series due to George Gasper.
The new family of $q$-congruences is then used to prove two conjectures
posed earlier by the authors.
\end{abstract}

\maketitle

\section{Introduction}
In 1914, Ramanujan \cite{Ramanujan} presented a number of fast
approximations of $1/\pi$, including
\begin{align}
\sum_{k=0}^{\infty}(6k+1)\frac{(\frac{1}{2})_k^3}{k!^3 4^k}
=\frac{4}{\pi}, \label{eq:ram}
\end{align}
where $(a)_n=a(a+1)\cdots(a+n-1)$ denotes the rising factorial.
In 1997, Van Hamme \cite{Hamme} proposed 13 interesting $p$-adic
analogues of Ramanujan-type formulas, such as
\begin{align}
\sum_{k=0}^{(p-1)/2}(6k+1)\frac{(\frac{1}{2})_k^3}{k!^3 4^k}
&\equiv p(-1)^{(p-1)/2}\pmod{p^4}, \label{eq:pram}
\end{align}
where  $p>3$ is a prime. Van Hamme's supercongruence
\eqref{eq:pram} was first proved by Long \cite{Long}.
It should be pointed out that all of the 13 supercongruences have
been proved by different techniques (see \cite{OZ,Swisher}). For
some background on Ramanujan-type supercongruences,
the reader is referred to Zudilin's paper \cite{Zud2009}.

In 2016, Long and Ramakrishna~\cite[Thm.~2]{LR} proved the following
supercongruence:
\begin{equation}
\sum_{k=0}^{p-1} (6k+1) \frac{(\frac{1}{3})_k^6}{k!^6} \equiv
\begin{cases} -p\displaystyle \Gamma_p\bigg(\frac{1}{3}\bigg)^9
\pmod{p^6}, &\text{if $p\equiv 1\pmod 6$,}\\[10pt]
 -\frac{p^4}{27}\displaystyle \Gamma_p\bigg(\frac{1}{3}\bigg)^9\pmod{p^6},
&\text{if $p\equiv 5\pmod 6$,}
\end{cases} \label{eq:d2}
\end{equation}
where $\Gamma_p(x)$ is the $p$-adic Gamma function.
This result for $p\equiv 1\pmod 6$ confirms the (D.2) supercongruence
of Van Hamme, which asserts a congruence modulo $p^4$.

During the past few years, many congruences and supercongruences
were generalized to the $q$-setting by various authors (see, for instance,
\cite{Gorodetsky,Guo2018,Guo2,Guo-gz,Guo-fac,Guo-m3,Guo-par,
GJZ,GS19,GSdiff,GS0,GS,GW2,GuoZu,NP,NP2,Straub,Tauraso2}).
In particular, the authors \cite[Thm.~2.3]{GS} proposed the following
partial $q$-analogue of Long and Rama\-krishna's
supercongruence \eqref{eq:d2}:
\begin{equation}
\sum_{k=0}^{n-1} [6k+1]\frac{(q;q^3)_k^6}{(q^3;q^3)_k^6} q^{3k}
\equiv
\begin{cases} 0  \pmod{[n]}, &\text{if $n\equiv 1\pmod 3$,}\\[10pt]
 0 \pmod{[n]\Phi_n(q)}, &\text{if $n\equiv 2\pmod 3$.}
\end{cases}  \label{eq;3rd-noa}
\end{equation}
Here and throughout the paper, we adopt the standard $q$-notation
(cf.\ \cite{GR}):
For an indeterminate $q$, let
\begin{equation*}
(a;q)_n=(1-a)(1-aq)\cdots (1-aq^{n-1})
\end{equation*}
be the {\em $q$-shifted factorial}.
For convenience, we compactly write
\begin{equation*}
(a_1,a_2,\ldots,a_m;q)_n=(a_1;q)_n (a_2;q)_n\cdots (a_m;q)_n.
\end{equation*}
Moreover,
$
[n]=[n]_q=1+q+\cdots+q^{n-1}
$
denotes the {\em $q$-integer}
and $\Phi_n(q)$ the $n$-th {\em cyclotomic polynomial} in $q$,
which may be defined as
\begin{align*}
\Phi_n(q)=\prod_{\substack{1\leqslant k\leqslant n\\ \gcd(n,k)=1}}(q-\zeta^k),
\end{align*}
where $\zeta$ is an $n$-th primitive root of unity.

The authors \cite[Conjectures 12.10 and 12.11]{GS} also proposed the
following conjectures, the first one generalizing
the $q$-congruence \eqref{eq;3rd-noa} for $n\equiv 2\pmod 3$.
\begin{conjecture}\label{conj-1} Let $d\geqslant 3$ and $n$ be positive
integers with $n\equiv -1\pmod{d}$. Then
\begin{equation*}
\sum_{k=0}^{M}[2dk+1]
\frac{(q;q^d)_k^{2d}}{(q^d;q^d)_k^{2d}}q^{d(d-2)k} \equiv 0 \pmod{[n]\Phi_{n}(q)^3},
\end{equation*}
where $M=((d-1)n-1)/d$ or $n-1$.
\end{conjecture}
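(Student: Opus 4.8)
The plan is to prove a stronger \emph{two-parameter} refinement by the method of creative microscoping, and then recover Conjecture~\ref{conj-1} by letting the two parameters tend to~$1$. Since $n\equiv-1\pmod d$ forces $\gcd(d,n)=1$, a natural choice of parametric sum is
\[
S_M(a,b)=\sum_{k=0}^{M}[2dk+1]\,
\frac{(aq,q/a,bq,q/b;q^d)_k}{(aq^d,q^d/a,bq^d,q^d/b;q^d)_k}\,
\frac{(q;q^d)_k^{2d-4}}{(q^d;q^d)_k^{2d-4}}\,q^{d(d-2)k},
\]
which is a very-well-poised basic hypergeometric series in the base $q^d$ with $a$-parameter $q$ (the factor $[2dk+1]$ being exactly $(1-q\cdot q^{2dk})/(1-q)$, and the parameters occurring in the very-well-poised pairs $(aq,q^d/a)$, $(bq,q^d/b)$ and $2d-4$ copies of $(q,q^d)$), and which reduces to the left-hand side of the conjecture at $a=b=1$. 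The target is the parametric $q$-congruence
\[
S_M(a,b)\equiv 0\pmod{(1-aq^n)(a-q^n)(1-bq^n)(b-q^n)}.
\]

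The heart of the argument is to show that $S_M(a,b)$ vanishes when $aq^n=1$. The single very-well-poised sum does not terminate conveniently at $a=q^{-n}$, so I would first apply George Andrews' multiseries extension of the Watson transformation to rewrite $S_M(a,b)$ as a multiple series whose inner summations are again very-well-poised. Upon setting $a=q^{-n}$ and invoking $n\equiv-1\pmod d$, each inner series acquires the Karlsson--Minton shape, in which a block of numerator parameters differs from denominator parameters by the base raised to nonnegative integer powers; George Gasper's Karlsson--Minton type summation then evaluates these inner series in closed form and the outer sum collapses to $0$. This yields $(1-aq^n)\mid S_M(a,b)$. Because the summand is invariant under $a\mapsto 1/a$, the factor $(a-q^n)$ divides $S_M(a,b)$ as well, and the same reasoning applied to $b$ produces the two remaining factors. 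As the four linear polynomials are pairwise coprime, their product divides $S_M(a,b)$, establishing the displayed parametric congruence.

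To descend to the conjecture I would write $S_M(a,b)=(1-aq^n)(a-q^n)(1-bq^n)(b-q^n)\,T(a,b)$ with $T$ regular at $a=b=1$: the denominators occurring in $S_M$ are products of factors $1-q^{dj}$ with $1\leqslant j\leqslant M\leqslant n-1$, hence coprime to $\Phi_n(q)$ since $\gcd(d,n)=1$. Setting $a=b=1$ gives $S_M(1,1)=(1-q^n)^4\,T(1,1)$, so the left-hand side of the conjecture is divisible by $(1-q^n)^4=\prod_{e\mid n}\Phi_e(q)^4$, which is in turn divisible by $[n]\Phi_n(q)^3=\Phi_n(q)^4\prod_{1<e<n,\,e\mid n}\Phi_e(q)$. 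Finally, the equivalence of the two choices $M=((d-1)n-1)/d$ and $M=n-1$ follows by noting that the index $j_0=((d-1)n-1)/d$ satisfies $1+dj_0=(d-1)n\equiv 0\pmod n$, while for every divisor $e>1$ of $n$ the smallest $j_e$ with $e\mid 1+dj_e$ satisfies $j_e<e\leqslant j_0$ (for $d\geqslant 3$); hence for $((d-1)n-1)/d<k\leqslant n-1$ the product $(q;q^d)_k$ is divisible by the whole of $[n]$, so that each such extra term is divisible by $[n]^{2d}$ and is therefore negligible modulo $[n]\Phi_n(q)^3$.

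I expect the main obstacle to be the middle step: after Andrews' transformation, coaxing the resulting multiple series at $a=q^{-n}$ into precisely the configuration demanded by Gasper's summation, so that the inner sums evaluate to factors making the outer sum vanish. This hinges on the numerator and denominator parameters satisfying the Karlsson--Minton integrality conditions, which is exactly where the hypothesis $n\equiv-1\pmod d$ enters; verifying these parameter alignments and the resulting vanishing is the delicate computational core, whereas the creative-microscoping skeleton and the limit $a,b\to1$ are comparatively routine.
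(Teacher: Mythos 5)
Your middle step (Andrews' transformation combined with Gasper's Karlsson--Minton type summation to force vanishing at $a=q^{-n}$) is in the same spirit as the paper's proof of its Theorem~\ref{thm:1}, and a parametric vanishing of that kind is plausible. The fatal problem is the descent from the parametric congruence to the conjecture. From $S_M(a,b)\equiv 0\pmod{(1-aq^n)(a-q^n)(1-bq^n)(b-q^n)}$ you conclude that $S_M(1,1)$ is divisible by $(1-q^n)^4=\prod_{e\mid n}\Phi_e(q)^4$. That is false as argued: writing $S_M(a,b)=(1-aq^n)(a-q^n)(1-bq^n)(b-q^n)\,T(a,b)$, the rational function $T$ has denominator dividing the denominators of the summands, which at $a=b=1$ become $(q^d;q^d)_k^{2d}$ with $k\leqslant n-1$. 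These are indeed coprime to $\Phi_n(q)$, as you note, but they are \emph{not} coprime to $\Phi_e(q)$ for proper divisors $e$ of $n$ with $1<e<n$: since $\gcd(d,e)=1$ and $e\leqslant n-1$, the factor $1-q^{de}$, which is divisible by $\Phi_e(q)$, occurs in $(q^d;q^d)_k$ for every $k\geqslant e$. Hence specializing $a=b=1$ only yields divisibility by $\Phi_n(q)^4$ --- i.e.\ the content of Theorem~\ref{thm:1} --- and gives no information whatsoever modulo the factors $\Phi_e(q)$ with $e<n$ that make up $[n]/\Phi_n(q)$. This is precisely why creative microscoping in the literature delivers congruences modulo powers of $\Phi_n(q)$, while the modulus $[n]$ always requires a separate argument; the paper supplies it through a pairing lemma (Lemma~\ref{lem:2.1}, leading to Theorem~\ref{thm:2}) and a root-of-unity argument (Section~\ref{sec:conjectures}): for each divisor $s>1$ of $n$ one evaluates the truncated sums at a primitive $s$-th root of unity $\zeta$, splits the summation range into complete blocks of length $s$ plus a partial block $0\leqslant k\leqslant m$ with $dm\equiv-1\pmod s$, and uses the cancellation of the $k$-th and $(m-k)$-th terms together with $c_\zeta(\ell s+k)=c_\zeta(\ell s)c_\zeta(k)$ to see every block sum vanish.

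Your closing argument for the equivalence of the two truncations $M=((d-1)n-1)/d$ and $M=n-1$ suffers from the same defect. For $k$ in the tail range, the numerator $(q;q^d)_k^{2d}$ is indeed divisible by $[n]^{2d}$, but the term itself is the quotient by $(q^d;q^d)_k^{2d}$, and the denominator cancels $\Phi_e(q)$-factors for $e<n$: if $k\equiv t\pmod e$ with $t\leqslant j_e$ (where $dj_e\equiv-1\pmod e$, $0\leqslant j_e<e$), the net $\Phi_e(q)$-valuation of $(q;q^d)_k/(q^d;q^d)_k$ is $0$, so such a tail term is in general divisible only by a high power of $\Phi_n(q)$ and not by $[n]$. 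So both the main descent and the truncation step prove only the $\Phi_n(q)^4$ part of the conjecture; the $[n]$ part is a genuine missing ingredient that your proposal does not address.
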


\begin{conjecture}\label{conj-2} Let $d\geqslant 3$ and $n>1$ be integers
with $n\equiv 1\pmod{d}$. Then
\begin{equation*}
\sum_{k=0}^{M}[2dk-1]
\frac{(q^{-1};q^d)_k^{2d}}{(q^d;q^d)_k^{2d}}q^{d^2 k} \equiv 0 \pmod{[n]\Phi_{n}(q)^3},
\end{equation*}
where $M=((d-1)n+1)/d$ or $n-1$.
\end{conjecture}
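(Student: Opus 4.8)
The plan is to prove Conjecture~\ref{conj-2} by the creative microscoping method: I would embed the sum in a two-parameter family, prove a parametric $q$-congruence that specialises back to the stated one, and treat the factor $[n]$ separately from the high power of $\Phi_n(q)$. Since $[2dk-1]=(1-q^{2dk-1})/(1-q)$ equals, up to the invertible constant $-q^{-1}$, the very-well-poised factor $(1-q^{-1}q^{2dk})/(1-q^{-1})$ in base $q^d$, the summand is recognised as a terminating very-well-poised basic hypergeometric series with very-well-poised parameter $q^{-1}$; this is precisely the shape to which the tools announced in the abstract apply. Incidentally, replacing $q$ by $q^{-1}$ turns this series into the one of Conjecture~\ref{conj-1}, with the residue condition $n\equiv1\pmod d$ becoming $n\equiv-1\pmod d$, so that a single master theorem plausibly covers both conjectures.

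First I would introduce two free parameters $a,b$ and deform four of the $2d$ equal factors, replacing the summand by
\[
[2dk-1]\,\frac{(aq^{-1},q^{-1}/a,bq^{-1},q^{-1}/b;q^d)_k\,(q^{-1};q^d)_k^{2d-4}}{(aq^d,q^d/a,bq^d,q^d/b;q^d)_k\,(q^d;q^d)_k^{2d-4}}\,q^{d^2k},
\]
and aim to show that the resulting sum vanishes modulo $(1-aq^n)(a-q^n)(1-bq^n)(b-q^n)$ in $\mathbb{Z}[a^{\pm1},b^{\pm1}][q]$ localised away from the denominators. Because the deformed summand is invariant under $a\mapsto1/a$, under $b\mapsto1/b$, and under the swap $a\leftrightarrow b$, and because the four linear factors are pairwise coprime for generic $a,b$, it suffices to prove that the sum vanishes identically in $b$ at the single specialisation $a=q^{-n}$; the remaining three specialisations then follow from these symmetries. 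At $a=q^{-n}$ the series terminates, and I would apply George Andrews' multiseries extension of the Watson transformation to rewrite it as a nested multiple sum whose inner series is very-well-poised and meets the Karlsson--Minton integrality condition forced by $n\equiv1\pmod d$; George Gasper's summation then evaluates that inner series in closed form, and the resulting product carries an explicit vanishing factor. Setting $a=b=1$ afterwards turns $(1-aq^n)(a-q^n)(1-bq^n)(b-q^n)$ into $(1-q^n)^4$, which is divisible by $\Phi_n(q)^4$, giving divisibility of the original sum by $\Phi_n(q)^4$.

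It then remains to upgrade the modulus from $\Phi_n(q)^4$ to $[n]\Phi_n(q)^3$. Writing $[n]=\prod_{m\mid n,\,m>1}\Phi_m(q)$, and noting that $\Phi_n(q)$ and the polynomials $\Phi_m(q)$ with $m\mid n$, $1<m<n$, are pairwise coprime, it is enough to prove divisibility by each such $\Phi_m(q)$ separately and then combine by the Chinese remainder theorem in $\mathbb{Z}[q]$. For a divisor $m$ of $n$ I would evaluate at a primitive $m$-th root of unity and use $\gcd(m,d)=1$ together with $q^m=1$ to show that the nonvanishing terms group into complete periods summing to zero, the same mechanism that produced the factor $[n]$ in \eqref{eq;3rd-noa}. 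This step also reconciles the two admissible truncations: since $n\equiv1\pmod d$ forces $d\cdot\frac{(d-1)n+1}{d}=(d-1)n+1\equiv1\pmod n$, the factor $(q^{-1};q^d)_k$ acquires a zero at every primitive $n$-th root of unity once $k>((d-1)n+1)/d$, so all terms with $((d-1)n+1)/d<k\leqslant n-1$ are divisible by $\Phi_n(q)^{2d}$; hence the truncations at $M=((d-1)n+1)/d$ and $M=n-1$ give the same congruence, and an analogous periodicity handles the smaller $\Phi_m(q)$.

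The hard part will be extracting the full fourth power rather than only $\Phi_n(q)^2$. A single deformation parameter yields just $(1-aq^n)(a-q^n)$, hence $\Phi_n(q)^2$ at $a=1$; the second parameter $b$ must contribute an independent pair of factors, and the delicate point is to verify that after Andrews' transformation the inner sum still satisfies Gasper's very-well-poised and Karlsson--Minton hypotheses, so that the closed-form evaluation genuinely vanishes at the specialisation. Matching these conditions against the fixed exponent $q^{d^2k}$ and the residue $n\equiv1\pmod d$, and confirming that no denominator in $(aq^d,q^d/a,bq^d,q^d/b;q^d)_k$ blows up as $a,b\to1$ at the relevant roots of unity, is where the substantive bookkeeping resides.
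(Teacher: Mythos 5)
Your reduction of the $\Phi_n(q)^4$-divisibility to creative microscoping contains a fatal gap: the assertion that the deformed series terminates at $a=q^{-n}$ is false under the hypotheses of Conjecture~\ref{conj-2}. With your deformation, the two $a$-dependent numerator factors specialise to
\begin{equation*}
(aq^{-1};q^d)_k\big|_{a=q^{-n}}=(q^{-n-1};q^d)_k,\qquad
(q^{-1}/a;q^d)_k\big|_{a=q^{-n}}=(q^{n-1};q^d)_k .
\end{equation*}
The first vanishes for some $k$ only if $dj=n+1$ has a solution with $0\leqslant j\leqslant k-1$, the second only if $dj=1-n$ has one; since $n\equiv 1\pmod d$ and $d\geqslant 3$ give $n+1\equiv 2\not\equiv 0\pmod{d}$, and $1-n<0$, neither ever vanishes. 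So at $a=q^{-n}$ your truncated sum is not a complete terminating series, and Andrews' transformation \eqref{andrews} and Gasper's summation \eqref{eq:vwp-km}, which apply to complete series, give no handle on it. Worse, the parametric congruence you aim for is simply false: take $d=3$, $n=4$ and specialise $a=b=q^{-4}$; your deformed sum collapses to
\begin{equation*}
\sum_{k=0}^{3}[6k-1]\,\frac{(q^{-5};q^3)_k^2}{(q^{7};q^3)_k^2}\,q^{9k},
\end{equation*}
which is a nonzero rational function of $q$ (at $q=1/2$ it is approximately $1.76$), whereas divisibility by $(1-aq^n)(a-q^n)(1-bq^n)(b-q^n)$ would force it to vanish there. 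This obstruction---naive microscoping with the deformation $(aq^{-1},q^{-1}/a;q^d)_k$ works for $d=2$, where $n+1$ is even, but not for $d\geqslant 3$---is precisely why the paper takes a different route.

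What the paper actually does (for the case $r=-1$ of Theorem~\ref{thm:1}, which covers Conjecture~\ref{conj-2} modulo $\Phi_n(q)^4$, the terms with $k>((d-1)n+1)/d$ being individually divisible by $\Phi_n(q)^{2d}$) is to insert into the \emph{undeformed} sum the identical pair of parameters $q^{d+(d-1)n}$, $q^{r-(d-1)n}$ in both numerator and denominator. These cancel, but they exhibit the sum as a terminating very-well-poised ${}_{2d+4}\phi_{2d+3}$ series: termination now comes from $(q^{r-(d-1)n};q^d)_k$, since $((d-1)n-r)/d$ is an integer exactly when $n\equiv-r\pmod d$. Andrews' transformation with $m=d$ then yields a prefactor $(q^{d+r},q^{-(d-1)n};q^d)_{((d-1)n-r)/d}$ divisible by $\Phi_n(q)^2$, and the remaining multisum is shown to vanish modulo $\Phi_n(q)^2$ by replacing its entries, via Lemma~\ref{lem:mod-square}, with shifted ones so that the resulting multisum is identically zero by the Karlsson--Minton-type summation of Lemma~\ref{lem:ms=0}; the two sources multiply to give $\Phi_n(q)^4$, with no auxiliary parameters at all. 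Your treatment of the factor $[n]$ (roots of unity plus periodicity) does match the paper's Section~\ref{sec:conjectures} in outline, but note that the vanishing of each complete period at a primitive $s$-th root of unity is not automatic: it is exactly Theorem~\ref{thm:2} (applied with $n\mapsto s$), proved by the pairing $k\leftrightarrow m-k$ of Lemma~\ref{lem:2.2} with $dm\equiv 1\pmod{s}$, an ingredient your sketch leaves implicit.
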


Note that Conjecture \ref{conj-1} does not hold for $d=2$ while
Conjecture \ref{conj-2} is still true for $d=2$.
In fact, the first author and Wang \cite{GW2} proved that
\begin{equation*}
\sum_{k=0}^{(n-1)/2}[4k+1]\frac{(q;q^2)_k^4}{(q^2;q^2)_k^4}
\equiv q^{(1-n)/2}[n]+\frac{(n^2-1)(1-q)^2}{24}q^{(1-n)/2}[n]^3
\pmod{[n]\Phi_n(q)^3}
\end{equation*}
for odd $n$, and the authors \cite{GSdiff} showed that
\begin{equation*}
\sum_{k=0}^{(n+1)/2}[4k-1]\frac{(q^{-1};q^2)_k^4}{(q^2;q^2)_k^4} q^{4k}
\equiv -(1+3q+q^2)[n]^4 \pmod{[n]^4\Phi_n(q)}
\end{equation*}
for odd $n>1$.

The last two $q$-congruences are quite special, as they are rare examples of
$q$-hyper\-geometric congruences that were rigorously shown
to hold modulo a high (fourth and even fifth) power
of a cyclotomic polynomial.
The main purpose of this paper is to add a complete two-parameter family of
$q$-hypergeometric congruences to the list of such $q$-congruences (see Theorem~\ref{thm:1}).

We shall also prove that Conjectures~\ref{conj-1} and \ref{conj-2} are true.
Our proof relies on the following result:
\begin{theorem}\label{thm:1}
Let $d,r,n$ be integers satisfying $d\geqslant 3$, $r\leqslant d-2$
(in particular, $r$ may be negative), and $n\geqslant d-r$, such that
$d$ and $r$ are coprime, and $n\equiv-r\pmod{d}$.
Then
\begin{equation}
\sum_{k=0}^{n-1}[2dk+r]
\frac{(q^r;q^d)_k^{2d}}{(q^d;q^d)_k^{2d}}q^{d(d-1-r)k} \equiv 0 \pmod{\Phi_{n}(q)^4}.
\label{eq:thm1}
\end{equation}
\end{theorem}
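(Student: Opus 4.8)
The plan is to prove \eqref{eq:thm1} by \emph{creative microscoping}, using Andrews' transformation and Gasper's summation as the engine for the crucial divisibility step. First I would introduce a free parameter $a$ and consider the parametrized sum
\begin{equation*}
S_n(a)=\sum_{k=0}^{n-1}[2dk+r]\,
\frac{(aq^r,q^r/a;q^d)_k\,(q^r;q^d)_k^{2d-2}}{(aq^d,q^d/a;q^d)_k\,(q^d;q^d)_k^{2d-2}}\,q^{d(d-1-r)k},
\end{equation*}
which reduces to the left-hand side of \eqref{eq:thm1} as $a\to1$. The point of this symmetric insertion of $a$ is that $S_n(a)$ remains a terminating very-well-poised ${}_{2d+2}\phi_{2d+1}$ series in base $q^d$ with very-well-poised parameter $q^r$ (the pairs $aq^r\leftrightarrow q^d/a$ and $q^r/a\leftrightarrow aq^d$ preserve the very-well-poised balance), so that the machinery of basic hypergeometric transformations becomes available.

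Before transforming, I would record an elementary reduction. Since $\gcd(d,r)=1$ and $n\equiv-r\pmod d$, we have $\gcd(n,d)=1$, so the denominators $(q^d;q^d)_k$ contain no factor of $\Phi_n(q)$ for $0\le k\le n-1$. On the other hand, writing $\ell=(n+r)/d$, one checks that $r+jd\equiv0\pmod n$ first occurs at $j=n-\ell$ (indeed $r+(n-\ell)d=n(d-1)$), so $(q^r;q^d)_k$ picks up its first factor of $\Phi_n(q)$ at $k=n-\ell+1$. Hence for $k>n-\ell$ the $2d-2$ plain copies $(q^r;q^d)_k^{2d-2}$ already contribute $\Phi_n(q)^{2d-2}$, and as $d\ge3$ gives $2d-2\ge4$, every such tail term is $\equiv0\pmod{\Phi_n(q)^4}$. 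This is exactly where the hypothesis $d\ge3$ enters, consistent with the failure of the statement for $d=2$, and it localizes the whole problem to the head $0\le k\le n-\ell$.

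Next I would apply George Andrews' multiseries extension of Watson's transformation to $S_n(a)$; matching ${}_{2d+2}\phi_{2d+1}$ against Andrews' ${}_{2m+4}\phi_{2m+3}$ gives $m=d-1$, so the series unfolds into a $(d-1)$-fold multiple sum of products of $q$-shifted factorials. The congruence condition $n\equiv-r\pmod d$ forces the relevant parameters into an integral-difference (Karlsson--Minton) configuration, which is precisely the hypothesis of Gasper's Karlsson--Minton type summation for very-well-poised series. Applying Gasper's summation then evaluates the inner sum in closed product form, and from this explicit evaluation one reads off that $S_n(a)$ is divisible by $(1-aq^n)(a-q^n)$. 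Specializing $a\to1$ turns this factor into $(1-q^n)^2$, which supplies $\Phi_n(q)^2$; the additional $\Phi_n(q)^2$ needed to reach the fourth power is extracted from the remaining explicit product together with the tail estimate above.

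The main obstacle I anticipate is the bookkeeping in this last chain rather than the transformations themselves. One must choose the specialization in Andrews' transformation so that the resulting multiple sum matches Gasper's summation exactly, verifying both the very-well-poised and the integral-difference conditions, and then track the cyclotomic content of the closed form through the limit $a\to1$, where the two linear factors $(1-aq^n)$ and $(a-q^n)$ coalesce and the multiplicity of $\Phi_n(q)$ must be counted with care. Controlling this multiplicity is exactly what upgrades the routine $\Phi_n(q)^2$ divisibility to the desired $\Phi_n(q)^4$, and I expect it to be the delicate heart of the argument.
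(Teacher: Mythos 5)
Your tail estimate is sound (it matches the remark following Theorem~\ref{thm:1}), and you have named the right two tools, but the way you propose to combine them breaks down at two decisive points. First, your parametrized sum $S_n(a)$ is \emph{not} a terminating basic hypergeometric series: Andrews' transformation \eqref{andrews} requires a numerator parameter of the form $q^{-N}$, and for generic $a$ no parameter of $S_n(a)$ has this form. Nor does the usual creative-microscoping specialization $a=q^{\pm n}$ restore termination here: the exponent $r+n$ satisfies $r+n\equiv 0\pmod d$ but is positive, so $(q^{r+n};q^d)_k$ never vanishes, while $n-r\equiv -2r\not\equiv 0\pmod d$ (because $\gcd(d,r)=1$ and $d\geqslant 3$), so $(q^{r-n};q^d)_k$ never vanishes either. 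Consequently there is no mechanism behind your claim that $S_n(a)$ is divisible by $(1-aq^n)(a-q^n)$; the points $a=q^{\pm n}$ are precisely the ones at which nothing can be computed. The special value that does produce termination is $a=q^{(d-1)n}$ (note $r-(d-1)n\equiv dr\equiv 0\pmod d$), and this is in effect what the paper exploits: it inserts the canceling pair $q^{d+(d-1)n}$, $q^{r-(d-1)n}$ into numerator and denominator, obtaining a genuinely terminating $_{2d+4}\phi_{2d+3}$ series, and applies Andrews' transformation with $m=d$ (not $m=d-1$).

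Second, your divisibility mechanism itself is not viable. The series with all upper parameters equal to $q^r$ is \emph{not} in Karlsson--Minton configuration: Gasper's summation \eqref{eq:gasper} needs parameters paired so that their product is $aq^{n_i+1}$, which in base $q^d$ with $a=q^r$ would force $q^{2r}=q^{r+d(n_i+1)}$, impossible since $\gcd(d,r)=1$. The missing idea --- the actual heart of the paper's proof --- is Lemma~\ref{lem:mod-square}: modulo $\Phi_n(q)^2$ one may replace each pair $(q^r;q^d)_k^2$ by $(q^{r-\alpha n},q^{r+\alpha n};q^d)_k$, and only after this congruence-level deformation do the parameters acquire the Karlsson--Minton structure. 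Moreover, the relevant case \eqref{eq:vwp-km} of Gasper's identity evaluates to \emph{zero}, not to a closed product, so there is no ``remaining explicit product'' from which to harvest extra cyclotomic factors. The paper's actual count is: the prefactor $(q^{d+r},q^{-(d-1)n};q^d)_N/(q^d,q^{r-(d-1)n};q^d)_N$, $N=((d-1)n-r)/d$, produced by Andrews' transformation contributes $\Phi_n(q)^2$ (its numerator contains both $1-q^{(d-1)n}$ and $1-q^{-(d-1)n}$), while the multisum is $\equiv 0\pmod{\Phi_n(q)^2}$ because Lemma~\ref{lem:mod-square} turns it into a multisum that vanishes identically by Lemma~\ref{lem:ms=0} (Andrews plus Gasper). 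Your proposed endgame, extracting the second $\Phi_n(q)^2$ from ``the remaining explicit product together with the tail estimate'', cannot work: the tail estimate only controls the discarded terms with $k>((d-1)n-r)/d$ and says nothing about the head of the sum.
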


This result is similar in nature to the two-parameter result in \cite[Thm.~1.1]{Guo-fac}
which, however, only concerned a $q$-congruence modulo $\Phi_n(q)^2$.

Note that the $q$-congruence \eqref{eq:thm1} is still true when the sum
is over $k$ from $0$ to $((d-1)n-r)/d$, since
$(q^r;q^d)_k/(q^d;q^d)_k \equiv 0\pmod{\Phi_n(q)}$ for
$((d-1)n-r)/d<k\leqslant n-1$.
(Also, we must have $((d-1)n-r)/d\le n-1$ since $n\geqslant d-r$.)
Thus, Theorem \ref{thm:1} implies that Conjectures~\ref{conj-1} and \ref{conj-2}
hold modulo $\Phi_n(q)^4$.

To prove that Conjectures~\ref{conj-1} and \ref{conj-2} also hold modulo $[n]$
(which in conjunction with Theorem~\ref{thm:1} would fully establish the
validity of the conjectures),
we need to prove the following result.
\begin{theorem}\label{thm:2} Let $d\geqslant 3$ and $n$ be positive
integers with $\gcd(d,n)=1$. Then
\begin{align}
\sum_{k=0}^{n-1}[2dk+1]
\frac{(q;q^d)_k^{2d}}{(q^d;q^d)_k^{2d}}q^{d(d-2)k} &\equiv 0 \pmod{\Phi_n(q)}, \label{eq:first-1}\\
\intertext{and}
\sum_{k=0}^{n-1}[2dk-1]
\frac{(q^{-1};q^d)_k^{2d}}{(q^d;q^d)_k^{2d}}q^{d^2 k} &\equiv 0 \pmod{\Phi_n(q)}. \label{eq:first-2}
\end{align}
\end{theorem}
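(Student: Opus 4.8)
Since $\Phi_n(q)$ is the minimal polynomial over $\mathbb{Q}$ of a primitive $n$-th root of unity $\zeta$, and since $\gcd(d,n)=1$ forces $(q^d;q^d)_k\big|_{q=\zeta}\neq0$ for $0\le k\le n-1$ (the factors $1-\zeta^{dj}$ with $1\le j\le k$ never vanish), proving each congruence amounts to showing that the corresponding sum vanishes at $q=\zeta$; the summands are rational functions whose denominators are coprime to $\Phi_n(q)$, so this is the correct reading of the congruence. I would treat the two sums in parallel, as they respond to the same reflection argument. Write $T_k$ for the $k$-th summand of the sum in \eqref{eq:first-1}. At $q=\zeta$ the numerator factor $(q;q^d)_k$ vanishes precisely when some $1-\zeta^{1+jd}=0$, i.e.\ when $1+jd\equiv0\pmod n$; since $d$ is invertible mod $n$ there is a unique such index $j=m$ in $\{0,\dots,n-1\}$, characterised by $dm\equiv-1\pmod n$. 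Hence $T_k\big|_{q=\zeta}=0$ for $m<k\le n-1$, and it suffices to prove $\sum_{k=0}^{m}T_k\big|_{q=\zeta}=0$.

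The heart of the proof is a reflection identity pairing the term $k$ with the term $m-k$. Using the substitutions $j\mapsto m-1-j$ in the numerator product and $j\mapsto m-j$ in the denominator product, together with $\zeta^{1+dm}=1$ and the elementary $1-\zeta^{-a}=-\zeta^{-a}(1-\zeta^{a})$, I would establish that
\[
\frac{(q;q^d)_{m-k}}{(q^d;q^d)_{m-k}}\bigg|_{q=\zeta}
=C\,\zeta^{(m-k)(1-d)}\,\frac{(q;q^d)_{k}}{(q^d;q^d)_{k}}\bigg|_{q=\zeta},
\qquad
C=\frac{(q^d;q^d)_m}{(q;q^d)_m}\bigg|_{q=\zeta}.
\]
Specialising to $k=0$ gives $C^{2}=\zeta^{m(d-1)}$, whence $C^{2d}=\zeta^{md(d-1)}=\zeta^{1-d}$ by $dm\equiv-1\pmod n$. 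Combining the $2d$-th power of the displayed identity with the evaluation $[2d(m-k)+1]/[2dk+1]\big|_{q=\zeta}=-\zeta^{-(2dk+1)}$ (again from $dm\equiv-1$) and with the ratio $\zeta^{d(d-2)(m-2k)}$ of the $q$-power factors, I would collect the total exponent of $\zeta$: the terms linear in $k$ cancel identically, and the remaining constant contribution reduces to $0$ modulo $n$. This yields the clean relation $T_{m-k}\big|_{q=\zeta}=-\,T_k\big|_{q=\zeta}$.

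Granting this, the involution $k\mapsto m-k$ groups the truncated sum into pairs that cancel, while a fixed point $k=m/2$ (occurring when $m$ is even) satisfies $T_{m/2}=-T_{m/2}$ and hence vanishes; in either case $\sum_{k=0}^{m}T_k\big|_{q=\zeta}=0$, proving \eqref{eq:first-1}. For \eqref{eq:first-2} I would run the identical scheme with the numerator $(q^{-1};q^d)_k$, which now vanishes at the unique index $m'$ with $dm'\equiv1\pmod n$. The analogous reflection produces the prefactor $\zeta^{-(m'-k)(1+d)}$ with constant $C'^{2d}=\zeta^{1+d}$, the reduced $q$-integer gives $[2d(m'-k)-1]/[2dk-1]\big|_{q=\zeta}=-\zeta^{-(2dk-1)}$, and the same bookkeeping (using the factor $q^{d^2k}$) again delivers $T_{m'-k}\big|_{q=\zeta}=-\,T_k\big|_{q=\zeta}$, so that sum too telescopes to $0$.

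The one delicate point is the sign in the reflection identity. Applying the pairing twice forces the proportionality constant to lie in $\{+1,-1\}$, but only a faithful tracking of the three sources of powers of $\zeta$---the factorial prefactor $C^{2d}$, the reduced $q$-integer, and the $q$-power $q^{d(d-2)k}$ (resp.\ $q^{d^2k}$)---shows that it equals $-1$ rather than $+1$; were it $+1$, the cancellation would fail. I expect this sign computation, which crucially uses $dm\equiv-1$ (resp.\ $dm'\equiv1$) to simplify both $md(d-1)$ and the constant term of the exponent modulo $n$, to be the main obstacle, while the remaining manipulations of $q$-shifted factorials at the root of unity are routine if carefully executed.
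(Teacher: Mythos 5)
Your proposal is correct and is essentially the paper's own proof: the paper likewise pairs the $k$-th and $(m-k)$-th summands, where $dm\equiv-1\pmod n$ (resp.\ $dm\equiv 1\pmod n$), via the $a=1$ case of its Lemmas~\ref{lem:2.1} and~\ref{lem:2.2}, and disposes of the summands with $m<k\leqslant n-1$ exactly as you do. The only cosmetic differences are that you evaluate at a primitive $n$-th root of unity instead of computing modulo $\Phi_n(q)$, and that you pin down the constant $C$ (hence the sign $-1$) by self-consistency of the reflection, whereas the paper establishes the reflection congruence by directly computing $(q;q^d)_m/(q^d;q^d)_m\equiv(-1)^m q^{m(dm-d+2)/2}\pmod{\Phi_n(q)}$.
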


We shall prove Theorem~\ref{thm:1}
in Section~\ref{sec:thm1}
by making a careful use of Andrews' multiseries
generalization \eqref{andrews} of the Watson
transformation~\cite[Theorem~4]{Andrews75},
combined with a special case of Gasper's very-well-poised
Karlsson--Minton type summation \cite[Eq.~(5.13)]{Gasper}.
We point out that Andrews' transformation
plays an important role in combinatorics and number theory. For example,
this transformation was utilized by Zudilin \cite{Zu} to solve a problem
of Asmus Schmidt. It was also used by Krattenthaler and Rivoal \cite{KR}
to provide an
alternative proof of a result by Zudilin that relates a very-well-poised
hypergeometric series with a Vasilenko--Vasilev-type multiple integral,
the latter serving as a tool in the study of the arithmetic behaviour
of values of the Riemann zeta function at integers.
Andrews' transformation was also used by the first author,
Jouhet and Zeng \cite{GJZ} to prove some $q$-congruences involving
$q$-binomial coefficients.
The couple Hessami Pilehrood \cite{HH} used this transformation
to give a short proof of a theorem of Zagier.
Recently, the present authors \cite{GS19,GS0} applied Andrews' transformation
to establish some $q$-congruences for truncated basic hypergeometric series.
We shall prove Theorem \ref{thm:2} in Section~\ref{sec:thm2}.
The proof of Conjectures~\ref{conj-1} and \ref{conj-2} will be given in Section~\ref{sec:conjectures}.
We conclude this short paper by Section~\ref{sec:final}, where we state
an open problem involving a $q$-hypergeometric congruence modulo the
fifth power of a cyclotomic polynomial.

\section{Proof of Theorem \ref{thm:1}}\label{sec:thm1}
We first give a simple $q$-congruence modulo $\Phi_n(q)^2$, which was
already used in \cite{GS0}.
\begin{lemma}\label{lem:mod-square}
Let $\alpha$, $r$ be integers and $n$ a positive integer. Then
\begin{equation}
(q^{r-\alpha  n},q^{r+\alpha  n};q^d)_k \equiv (q^r;q^d)_k^2 \pmod{\Phi_n(q)^2}.
\label{eq:mod-square}
\end{equation}
\end{lemma}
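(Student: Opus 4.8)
The plan is to reduce the stated congruence to a single-factor identity and then multiply up. First I would expand the two $q$-shifted factorials as products and group the factors with the same index $j$ together:
\[
(q^{r-\alpha n};q^d)_k\,(q^{r+\alpha n};q^d)_k
=\prod_{j=0}^{k-1}(1-q^{r+dj-\alpha n})(1-q^{r+dj+\alpha n}).
\]
Since congruence modulo the ideal generated by $\Phi_n(q)^2$ is preserved under multiplication, it then suffices to establish, for each exponent $a:=r+dj$, the single congruence
\[
(1-q^{a-\alpha n})(1-q^{a+\alpha n})\equiv(1-q^a)^2\pmod{\Phi_n(q)^2};
\]
multiplying these $k$ congruences over $j=0,\dots,k-1$ reproduces $(q^r;q^d)_k^2$ on the right-hand side and yields the lemma.

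The key step is therefore the single-factor congruence, which I would prove by a direct computation of the difference. Expanding and simplifying gives
\[
(1-q^{a-\alpha n})(1-q^{a+\alpha n})-(1-q^a)^2
=2q^a-q^{a-\alpha n}-q^{a+\alpha n}
=-q^{a-\alpha n}(q^{\alpha n}-1)^2.
\]
The crucial observation is that this difference factors cleanly as $-q^{a-\alpha n}(q^{\alpha n}-1)^2$, producing a \emph{square} of $(q^{\alpha n}-1)$. Because $\Phi_n(q)$ divides $q^n-1$, and $q^n-1$ divides $q^{\alpha n}-1$, we have $\Phi_n(q)\mid q^{\alpha n}-1$, whence $\Phi_n(q)^2\mid(q^{\alpha n}-1)^2$ and the difference vanishes modulo $\Phi_n(q)^2$. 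This settles the single-factor congruence and hence, by the grouping above, the lemma.

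The main obstacle here is not conceptual difficulty but bookkeeping: one must spot the exact factorisation of the difference as $-q^{a-\alpha n}(q^{\alpha n}-1)^2$, which is precisely what furnishes the quadratic power of $\Phi_n(q)$, and one must be mildly careful that the argument survives for negative values of $\alpha$ and for the possibly negative (Laurent) exponents $a-\alpha n$ that can arise. Both points are handled by carrying out the entire computation in the ring $\Z[q,q^{-1}]$, in which $q^n-1$ still divides $q^{\alpha n}-1$ for every integer $\alpha$, so the divisibility $\Phi_n(q)\mid q^{\alpha n}-1$ holds without sign restriction.
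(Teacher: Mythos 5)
Your proof is correct and takes essentially the same route as the paper: both reduce the lemma to the single-factor congruence $(1-q^{a-\alpha n})(1-q^{a+\alpha n})\equiv(1-q^{a})^{2}\pmod{\Phi_n(q)^2}$ and then multiply over the $k$ paired factors. The exact identity you use, $(1-q^{a-\alpha n})(1-q^{a+\alpha n})-(1-q^{a})^{2}=-q^{a-\alpha n}(q^{\alpha n}-1)^{2}$, is the same algebraic identity the paper writes (with the roles of $a$ and $\alpha n$ interchanged), so the arguments coincide, with your version handling the sign/unit bookkeeping slightly more directly and being explicit about working in $\mathbb{Z}[q,q^{-1}]$.
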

\begin{proof}
For any integer $j$, it is easy to check that
\begin{equation*}
(1-q^{\alpha n-dj+d-r})(1-q^{\alpha  n+dj-d+r})+(1-q^{dj-d+r})^2
q^{\alpha  n-dj+d-r}=(1-q^{\alpha  n})^2
\end{equation*}
and $1-q^{\alpha n}\equiv 0\pmod{\Phi_n(q)}$, and so
\begin{equation*}
(1-q^{\alpha  n-dj+d-r})(1-q^{\alpha  n+dj-d+r})\equiv -(1-q^{dj-d+r})^2
q^{\alpha  n-dj+d-r}\pmod{\Phi_n(q)^2}.
\end{equation*}
The proof then follows easily from the above $q$-congruence.
\end{proof}

We will make use of a powerful transformation formula due to
Andrews \cite[Theorem~4]{Andrews75}, which can be stated as follows:
\begin{align}
\sum_{k\geqslant 0}\frac{(a,q\sqrt{a},-q\sqrt{a},b_1,c_1,\dots,b_m,c_m,q^{-N};q)_k}
{(q,\sqrt{a},-\sqrt{a},aq/b_1,aq/c_1,\dots,aq/b_m,aq/c_m,aq^{N+1};q)_k}
\left(\frac{a^mq^{m+N}}{b_1c_1\cdots b_mc_m}\right)^k &\notag\\[5pt]
=\frac{(aq,aq/b_mc_m;q)_N}{(aq/b_m,aq/c_m;q)_N}
\sum_{j_1,\dots,j_{m-1}\geqslant 0}
\frac{(aq/b_1c_1;q)_{j_1}\cdots(aq/b_{m-1}c_{m-1};q)_{j_{m-1}}}
{(q;q)_{j_1}\cdots(q;q)_{j_{m-1}}} \notag\\[5pt]
\times\frac{(b_2,c_2;q)_{j_1}\dots(b_m,c_m;q)_{j_1+\dots+j_{m-1}}}
{(aq/b_1,aq/c_1;q)_{j_1}
\dots(aq/b_{m-1},aq/c_{m-1};q)_{j_1+\dots+j_{m-1}}}& \notag\\[5pt]
\times\frac{(q^{-N};q)_{j_1+\dots+j_{m-1}}}
{(b_mc_mq^{-N}/a;q)_{j_1+\dots+j_{m-1}}}
\frac{(aq)^{j_{m-2}+\dots+(m-2)j_1} q^{j_1+\dots+j_{m-1}}}
{(b_2c_2)^{j_1}\cdots(b_{m-1}c_{m-1})^{j_1+\dots+j_{m-2}}}&.  \label{andrews}
\end{align}
This transformation actually constitutes a multiseries generalization
of Watson's
$_8\phi_7$ transformation formula (see \cite[Appendix (III.18)]{GR})
which we state here in standard notation for basic hypergeometric series
\cite[Section 1]{GR}:
\begin{align}
& _{8}\phi_{7}\!\left[\begin{array}{cccccccc}
a,& qa^{\frac{1}{2}},& -qa^{\frac{1}{2}}, & b,    & c,    & d,    & e,    & q^{-n} \\
  & a^{\frac{1}{2}}, & -a^{\frac{1}{2}},  & aq/b, & aq/c, & aq/d, & aq/e, & aq^{n+1}
\end{array};q,\, \frac{a^2q^{n+2}}{bcde}
\right] \notag\\[5pt]
&\quad =\frac{(aq, aq/de;q)_n}
{(aq/d, aq/e;q)_n}
\,{}_{4}\phi_{3}\!\left[\begin{array}{c}
aq/bc,\ d,\ e,\ q^{-n} \\
aq/b,\, aq/c,\, deq^{-n}/a
\end{array};q,\, q
\right].  \label{eq:8phi7}
\end{align}

Next, we recall the following very-well-poised Karlsson--Minton type summation
by Gasper~\cite[Eq.~(5.13)]{Gasper} (see also \cite[Ex.~2.33 (i)]{GR}).
\begin{align}
\sum_{k=0}^\infty\frac{(a,q\sqrt{a},-q\sqrt{a},b,a/b,d,e_1,aq^{n_1+1}/e_1,\dots,
e_m,aq^{n_m+1}/e_m;q)_k}{(q,\sqrt{a},-\sqrt{a},aq/b,bq,aq/d,aq/e_1,e_1q^{-n_1},
\dots,aq/e_m,e_mq^{-n_m};q)_k}\left(\frac{q^{1-\nu}}d\right)^k&\notag\\
=\frac{(q,aq,aq/bd,bq/d;q)_\infty}{(bq,aq/b,aq/d,q/d;q)_\infty}
\prod_{j=1}^m\frac{(aq/be_j,bq/e_j;q)_{n_j}}{(aq/e_j,q/e_j;q)_{n_j}}&,
\label{eq:gasper}
\end{align}
where $n_1,\dots,n_m$ are nonnegative integers,
$\nu=n_1+\cdots+n_m$, and $|q^{1-\nu}/d|<1$
when the series does not terminate.
For an elliptic extension of the terminating $d=q^{-\nu}$ case of
\eqref{eq:gasper}, see \cite[Eq.~(1.7)]{RS}.

In particular, we note that for $d=bq$ the right-hand side of
\eqref{eq:gasper} vanishes.
Putting in addition $b=q^{-N}$ we obtain the following terminating summation:
\begin{equation}\label{eq:vwp-km}
\sum_{k=0}^N\frac{(a,q\sqrt{a},-q\sqrt{a},e_1,aq^{n_1+1}/e_1,\dots,
e_m,aq^{n_m+1}/e_m,q^{-N};q)_k}{(q,\sqrt{a},-\sqrt{a},aq/e_1,e_1q^{-n_1},
\dots,aq/e_m,e_mq^{-n_m},aq^{N+1};q)_k}q^{(N-\nu)k}=0,
\end{equation}
valid for $N>\nu=n_1+\cdots+n_m$.

By suitably combining \eqref{andrews} with \eqref{eq:vwp-km},
we obtain the following multiseries summation formula:
\begin{lemma}\label{lem:ms=0}
Let $m\geqslant 2$. Let $q$, $a$ and $e_1,\dots,e_{m+1}$ be arbitrary
parameters with
$e_{m+1}=e_1$, and let $n_1,\dots,n_m$ and $N$ be nonnegative integers
such that $N>n_1+\cdots+n_m$. Then
\begin{align}
0=\sum_{j_1,\dots,j_{m-1}\geqslant 0}
\frac{(e_1q^{-n_1}/e_2;q)_{j_1}\cdots(e_{m-1}q^{-n_{m-1}}/e_m;q)_{j_{m-1}}}
{(q;q)_{j_1}\cdots(q;q)_{j_{m-1}}} \notag\\[5pt]
\times\frac{(aq^{n_2+1}/e_2,e_3;q)_{j_1}\dots
(aq^{n_m+1}/e_m,e_{m+1};q)_{j_1+\dots+j_{m-1}}}
{(e_1q^{-n_1},aq/e_2;q)_{j_1}
\dots(e_{m-1}q^{-n_{m-1}},aq/e_m;q)_{j_1+\dots+j_{m-1}}}& \notag\\[5pt]
\times\frac{(q^{-N};q)_{j_1+\dots+j_{m-1}}}
{(e_1q^{n_m-N+1}/e_m;q)_{j_1+\dots+j_{m-1}}}
\frac{(aq)^{j_{m-2}+\dots+(m-2)j_1} q^{j_1+\dots+j_{m-1}}}
{(aq^{n_2+1}e_3/e_2)^{j_1}\cdots
(aq^{n_{m-1}+1}e_m/e_{m-1})^{j_1+\dots+j_{m-2}}}&. \label{mkm0}
\end{align}
\end{lemma}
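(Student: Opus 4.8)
The plan is to obtain \eqref{mkm0} as a specialization of Andrews' transformation \eqref{andrews}, chosen so that its left-hand side collapses to zero by the terminating Karlsson--Minton evaluation \eqref{eq:vwp-km}. Concretely, I would set
\[
b_j=\frac{aq^{n_j+1}}{e_j},\qquad c_j=e_{j+1}\qquad(j=1,\dots,m),
\]
with the cyclic convention $e_{m+1}=e_1$. The merit of this choice is that it simultaneously casts the left-hand side of \eqref{andrews} into very-well-poised Karlsson--Minton form and transforms the right-hand multiseries into precisely the one appearing in \eqref{mkm0}.

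First I would treat the left-hand side. From the substitution one computes $aq/b_j=e_jq^{-n_j}$ and $aq/c_j=aq/e_{j+1}$, so that the $2m$ numerator parameters $b_1,c_1,\dots,b_m,c_m$ together with their very-well-poised partners reorganize, after reindexing and using $e_{m+1}=e_1$, into the $m$ pairs $(e_j,aq/e_j)$ and $(aq^{n_j+1}/e_j,\,e_jq^{-n_j})$ that occur in \eqref{eq:vwp-km}. The decisive point is the argument of the series: since $b_jc_j=aq^{n_j+1}e_{j+1}/e_j$, the product $b_1c_1\cdots b_mc_m$ telescopes to $a^mq^{\nu+m}$ with $\nu=n_1+\cdots+n_m$, so the base $a^mq^{m+N}/(b_1c_1\cdots b_mc_m)$ in \eqref{andrews} reduces exactly to $q^{N-\nu}$, matching the factor $q^{(N-\nu)k}$ in \eqref{eq:vwp-km}. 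Hence, under this specialization, the left-hand side of \eqref{andrews} coincides term by term with the left-hand side of \eqref{eq:vwp-km}, and the latter vanishes because $N>\nu$.

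It remains to match the right-hand side of \eqref{andrews} with that of \eqref{mkm0} under the same substitution, which is a routine factor-by-factor check: $aq/(b_ic_i)=e_iq^{-n_i}/e_{i+1}$ yields the factors $(e_iq^{-n_i}/e_{i+1};q)_{j_i}$; the pairs $(b_{i+1},c_{i+1})=(aq^{n_{i+1}+1}/e_{i+1},e_{i+2})$ and $(aq/b_i,aq/c_i)=(e_iq^{-n_i},aq/e_{i+1})$ account for the remaining numerator and denominator products; the quantity $b_mc_mq^{-N}/a=e_1q^{n_m-N+1}/e_m$ produces the factor $(e_1q^{n_m-N+1}/e_m;q)_{j_1+\dots+j_{m-1}}$; and the monomial powers agree because $b_ic_i=aq^{n_i+1}e_{i+1}/e_i$. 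The only factor left over is the prefactor $(aq,aq/b_mc_m;q)_N/(aq/b_m,aq/c_m;q)_N$, which does not depend on the summation indices and, for generic values of $a,e_1,\dots,e_m$, is a nonzero rational function of these parameters. Since the full right-hand side of \eqref{andrews} equals the vanishing left-hand side, we may cancel this nonzero prefactor and conclude that the multiseries is identically $0$, establishing \eqref{mkm0}.

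I expect the sole obstacle to be the careful bookkeeping required to confirm that each of the numerous $q$-shifted factorials and monomial exponents in \eqref{andrews} specializes to exactly the expression recorded in \eqref{mkm0}; once the dictionary $b_j=aq^{n_j+1}/e_j$, $c_j=e_{j+1}$ is fixed, every such identity is elementary. The only genuinely conceptual ingredients are the telescoping evaluation of $\prod_j b_jc_j$ and the observation that the surviving prefactor is a nonzero, hence cancelable, rational function of the free parameters.
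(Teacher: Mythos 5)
Your proposal is correct and follows essentially the same route as the paper: the identical specialization $b_i\mapsto aq^{n_i+1}/e_i$, $c_i\mapsto e_{i+1}$ (with $e_{m+1}=e_1$) in Andrews' transformation \eqref{andrews}, the observation that the base collapses to $q^{N-\nu}$ so the left-hand side vanishes by the Karlsson--Minton evaluation \eqref{eq:vwp-km}, and cancellation of the nonzero prefactor. The paper phrases this as dividing both sides by the prefactor, which is the same argument.
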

\begin{proof}
By specializing the parameters in the multisum transformation \eqref{andrews}
by $b_i\mapsto aq^{n_i+1}/e_i$, $c_i\mapsto e_{i+1}$, for $1\le i\le m$
(where $e_{m+1}=e_1$), and dividing both sides of
the identity by the prefactor of the multisum,
we obtain that the series on the right-hand side of \eqref{mkm0} equals
\begin{align*}
&\frac{(e_mq^{-n_m},aq/e_1;q)_N}{(aq,e_mq^{-n_m}/e_1;q)_N}\\&\times
\sum_{k=0}^N\frac{(a,q\sqrt{a},-q\sqrt{a},e_1,aq^{n_1+1}/e_1,\dots,
e_m,aq^{n_m+1}/e_m,q^{-N};q)_k}{(q,\sqrt{a},-\sqrt{a},aq/e_1,e_1q^{-n_1},
\dots,aq/e_m,e_mq^{-n_m},aq^{N+1};q)_k}q^{(N-\nu)k},
\end{align*}
with $\nu=n_1+\cdots+n_m$.
Now the last sum vanishes by the special case of Gasper's summation stated in
\eqref{eq:vwp-km}.
\end{proof}

We collected enough ingredients and are ready to prove Theorem~\ref{thm:1}.

\begin{proof}[Proof of Theorem \ref{thm:1}]
The left-hand side of \eqref{eq:thm1} can be written as the following
multiple of a terminating $_{2d+4}\phi_{2d+3}$ series:
\begin{align*}
\frac{1-q^r}{1-q}
\sum_{k=0}^{((d-1)n-r)/d}\frac{(q^r,q^{d+\frac{r}{2}},-q^{d+\frac{r}{2}},
\overbrace{q^r,\ldots,q^r}^{\text{$(2d-1)$ times}},q^{d+(d-1)n},q^{r-(d-1)n};q^d)_k}
{(q^d,q^{\frac{r}{2}},-q^{\frac{r}{2}},q^d,\ldots,q^d,q^{r-(d-1)n},q^{d+(d-1)n};q^d)_k}
q^{d(d-1-r)k}.
\end{align*}
Now, by the $m=d$ case of Andrews' transformation \eqref{andrews},
we can write the above expression as
\begin{align}
\frac{(1-q^r)(q^{d+r},q^{-(d-1)n};q^d)_{((d-1)n-r)/d}}
{(1-q)(q^d,q^{r-(d-1)n};q^d)_{((d-1)n-r)/d}}
\sum_{j_1,\dots,j_{d-1}\geqslant 0}
\frac{(q^{d-r};q^d)_{j_1}\cdots(q^{d-r};q^d)_{j_{d-1}}}
{(q^d;q^d)_{j_1}\cdots(q^d;q^d)_{j_{d-1}}}& \notag\\[5pt]
\times\frac{(q^r,q^r;q^d)_{j_1}\dots (q^r,q^r;q^d)_{j_1+\dots+j_{d-2}}
 (q^r,q^{d+(d-1)n};q^d)_{j_1+\dots+j_{d-1}}}
{(q^d,q^d;q^d)_{j_1}
\dots(q^d,q^d;q^d)_{j_1+\dots+j_{d-1}}}& \notag\\[5pt]
\times\frac{(q^{r-(d-1)n};q^d)_{j_1+\dots+j_{d-1}}}
{(q^{d+r};q^d)_{j_1+\dots+j_{d-1}}}
\frac{q^{(d+r)(j_{d-2}+\dots+(d-2)j_1)} q^{d(j_1+\dots+j_{d-1})}}
{q^{2rj_1}\cdots q^{2r(j_1+\dots+j_{d-2})}}&. \label{eq:multi}
\end{align}

It is easy to see that the $q$-shifted factorial $(q^{d+r};q^d)_{((d-1)n-r)/d}$
contains the factor $1-q^{(d-1)n}$ which is a multiple of $1-q^n$.
Similarly, the $q$-shifted factorial $(q^{-(d-1)n};q^d)_{((d-1)n-r)/d}$
contains the factor $1-q^{-(d-1)n}$ (again being a multiple of $1-q^n$)
since $((d-1)n-r)/d\geqslant 1$ holds due to the conditions
$d\geqslant 3$, $r\leqslant d-2$, and $n\geqslant d-r$.
This means that the $q$-factorial $(q^{d+r},q^{-(d-1)n};q^d)_{((d-1)n-r)/d}$
in the numerator of the fraction before the multisummation is divisible by $\Phi_n(q)^2$.
Moreover, it is easily seen that the $q$-factorial $(q^d,q^{r-(d-1)n};q^d)_{((d-1)n-r)/d}$
in the denominator is coprime with $\Phi_n(q)$.

Note that the non-zero terms in the multisummation in \eqref{eq:multi}
are those indexed by $(j_1,\ldots,j_{d-1})$ that satisfy
$j_1+\dots+j_{d-1}\leqslant ((d-1)n-r)/d$ because of the appearance of the
factor $(q^{r-(d-1)n};q^d)_{j_1+\dots+j_{d-1}}$ in the numerator.
None of the factors appearing in the denominator
of the multisummation of \eqref{eq:multi} contain a
factor of the form $1-q^{\alpha n}$ (and are therefore coprime with $\Phi_n(q)$),
except for $(q^{d+r};q^d)_{j_1+\dots+j_{d-1}}$
when $j_1+\dots+j_{d-1}=((d-1)n-r)/d$.
(In this case, the factor $1-q^{(d-1)n}$ appears
in the numerator.) Writing $n=ad-r$ (with $a\geqslant 1$),
we have $j_1+\dots+j_{d-1}=a(d-1)-r$.
Since $r\le d-2$, there must be an $i$ with $j_i\geqslant a$.
Then $(q^{d-r};q^d)_{j_i}$ contains the factor $1-q^{d-r+d(a-1)}=1-q^n$
which is a multiple of $\Phi_n(q)$.
So the denominator of the reduced form of the multisum in
\eqref{eq:multi} is coprime with $\Phi_n(q)$.
What remains is to show that the multisum in \eqref{eq:multi},
without the prefactor, is divisible by $\Phi_n(q)^2$, i.e. vanishes
modulo $\Phi_n(q)^2$.

By repeated application of Lemma~\ref{lem:mod-square},
the mulitsum in \eqref{eq:multi},
without the prefactor, is modulo $\Phi_n(q)^2$ congruent to
\begin{align*}
\sum_{j_1,\dots,j_{d-1}\geqslant 0}
\frac{(q^{d-r};q^d)_{j_1}\cdots(q^{d-r};q^d)_{j_{d-1}}}
{(q^d;q^d)_{j_1}\cdots(q^d;q^d)_{j_{d-1}}}& \notag\\[5pt]
\times\frac{(q^{r-(d-2)n},q^{r+(d-2)n};q^d)_{j_1}\dots
(q^{r-n},q^{r+n};q^d)_{j_1+\dots+j_{d-2}}
 (q^r,q^{d+(d-1)n};q^d)_{j_1+\dots+j_{d-1}}}
{(q^{d+(d-1)n},q^{d-(d-1)n};q^d)_{j_1}
\dots(q^{d+2n},q^{d-2n};q^d)_{j_1+\dots+j_{d-2}}
(q^{d+n},q^{d-n};q^d)_{j_1+\dots+j_{d-1}}}& \notag\\[5pt]
\times\frac{(q^{r-(d-1)n};q^d)_{j_1+\dots+j_{d-1}}}
{(q^{d+r};q^d)_{j_1+\dots+j_{d-1}}}
\frac{q^{(d+r)(j_{d-2}+\dots+(d-2)j_1)} q^{d(j_1+\dots+j_{d-1})}}
{q^{2rj_1}\cdots q^{2r(j_1+\dots+j_{d-2})}}&.
\end{align*}
However, this sum vanishes due to the $m=d$, $q\mapsto q^d$, $a\mapsto q^r$,
$e_1\mapsto q^{d+(d-1)n}$, $e_i\mapsto q^{r+(d-i+1)n}$, $n_1=0$,
$n_i\mapsto (n+r-d)/d$, $2\leqslant i\leqslant d$, $N=((d-1)n-r)/d$, case
of Lemma~\ref{lem:ms=0}.
\end{proof}

\section{Proof of Theorem \ref{thm:2}}\label{sec:thm2}
We first give the following result, which is a generalization of \cite[Lemma 3.1]{GS}.
\begin{lemma}\label{lem:2.1}
Let $d$, $m$ and $n$ be positive integers with  $m\leqslant n-1$ and $dm\equiv -1\pmod{n}$.
Then, for $0\leqslant k\leqslant m$, we have
\begin{equation*}
\frac{(aq;q^d)_{m-k}}{(q^d/a;q^d)_{m-k}}
\equiv (-a)^{m-2k}\frac{(aq;q^d)_k}{(q^d/a;q^d)_k} q^{m(dm-d+2)/2+(d-1)k}
\pmod{\Phi_n(q)}.
\end{equation*}
\end{lemma}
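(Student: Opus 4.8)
The plan is to exploit the single arithmetic hypothesis $dm\equiv-1\pmod n$. Together with $\Phi_n(q)\mid q^n-1$, so that $q^n\equiv1\pmod{\Phi_n(q)}$, it yields the reflection relation
\[
q^{dm}\equiv q^{-1}\pmod{\Phi_n(q)}.
\]
Every factor of the two $q$-shifted factorials on the left-hand side carries an exponent of the form $1+dj$ (numerator) or $d(i+1)$ (denominator), and the idea is to use this relation to fold the high-index factors of a length-$(m-k)$ product onto low-index ones, thereby producing a length-$k$ product at the cost of an explicit monomial in $a$ and $q$.

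First I would treat the denominator $(q^d/a;q^d)_{m-k}=\prod_{i=1}^{m-k}(1-q^{di}/a)$. Reindexing by $i\mapsto m-l$ and invoking $q^{d(m-l)}\equiv q^{-1-dl}$, each factor becomes $1-q^{-1-dl}/a=-a^{-1}q^{-1-dl}(1-aq^{1+dl})$. Collecting the resulting monomials over $l=k,\dots,m-1$ shows that, modulo $\Phi_n(q)$, the denominator equals $(-1)^{m-k}a^{-(m-k)}q^{-S}$ times the numerator-type product $(aq^{1+dk};q^d)_{m-k}=\prod_{l=k}^{m-1}(1-aq^{1+dl})$, where $S=\sum_{l=k}^{m-1}(1+dl)$.

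It then remains to relate $(aq;q^d)_{m-k}$ to $(aq^{1+dk};q^d)_{m-k}$. As a rational-function identity—valid whether or not the index ranges overlap, since both sides cross-multiply to $\prod_{j=0}^{m-1}(1-aq^{1+dj})$—their quotient equals $(aq;q^d)_k\big/\prod_{j=m-k}^{m-1}(1-aq^{1+dj})$. Applying the same reflection to the remaining product via $j\mapsto m-1-i$ converts it into $(-a)^kq^{-T}(q^d/a;q^d)_k$ with $T=\sum_{i=0}^{k-1}d(i+1)=dk(k+1)/2$. Assembling the three contributions recovers $(aq;q^d)_k/(q^d/a;q^d)_k$ times a pure monomial in $-a$ and $q$.

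Finally I would collect the bookkeeping. The powers of $a$ combine as $a^{m-k}a^{-k}=a^{m-2k}$ and the signs as $(-1)^m$, which is precisely $(-a)^{m-2k}$; the total $q$-exponent is $S+T$, and summing the two arithmetic progressions gives $S+T=m(dm-d+2)/2+(d-1)k$, matching the claim. I expect the only genuine obstacle to be this exponent bookkeeping—keeping the monomials from the two reflections and the reindexing shifts mutually consistent—rather than anything conceptual, since the relation $q^{dm}\equiv q^{-1}$ supplies all the structure. A convenient sanity check is that the asserted congruence is involutive under $k\mapsto m-k$ modulo $\Phi_n(q)$ (the two exponents sum to $m(dm+1)\equiv0\pmod n$), a symmetry that the final monomial indeed respects.
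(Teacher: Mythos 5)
Your proposal is correct and rests on exactly the same idea as the paper's proof: exploiting $q^{dm+1}\equiv 1\pmod{\Phi_n(q)}$ to reflect denominator-type factors $1-q^{di}/a$ into numerator-type factors $1-aq^{1+dj}$ (and vice versa), with the final monomial produced by the same bookkeeping; your exponent computation $S+T=m(dm-d+2)/2+(d-1)k$ and sign count $(-1)^m a^{m-2k}=(-a)^{m-2k}$ check out. The only organizational difference is that the paper pivots through the $k=0$ case $(aq;q^d)_m/(q^d/a;q^d)_m\equiv(-a)^m q^{m(dm-d+2)/2}$ and then simplifies a correction quotient, whereas you fold the denominator directly onto the shifted product $(aq^{1+dk};q^d)_{m-k}$ and invoke a rational-function identity, which is an equivalent rearrangement of the same argument.
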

\begin{proof}In view of $q^n\equiv 1\pmod{\Phi_n(q)}$, we have
\begin{align}\label{aqcong}
\frac{(aq;q^d)_{m} }{(q^d/a;q^d)_{m}}
&=\frac{(1-aq)(1-aq^{d+1})\cdots (1-aq^{dm-d+1})}
{(1-q^d/a)(1-q^{2d}/a)\cdots (1-q^{dm}/a)} \notag\\[5pt]
&\equiv \frac{(1-aq)(1-aq^{d+1})\cdots (1-aq^{dm-d+1})}
{(1-q^{d-dm-1}/a)(1-q^{2d-dm-1}/a)\cdots (1-q^{-1}/a)}\notag\\[5pt]
&=(-a)^{m}q^{m(dm-d+2)/2} \pmod{\Phi_n(q)}.
\end{align}
Furthermore, modulo $\Phi_n(q)$, we get
\begin{align*}
\frac{(aq;q^d)_{m-k}}{(q^d/a;q^d)_{m-k}}
&=\frac{(aq;q^d)_{m}}{(q^d/a;q^d)_{m}}
\frac{(1-q^{dm-dk+d}/a)(1-q^{dm-dk+2d}/a)\cdots (1-q^{dm}/a)}
{(1-aq^{dm-dk+1})(1-aq^{dm-dk+d+1})\cdots (1-aq^{dm-d+1})}
\\[5pt]
&\equiv \frac{(aq;q^d)_{m}}{(q^d/a;q^d)_{m}}
\frac{(1-q^{d-dk-1}/a)(1-q^{2d-dk-1}/a)\cdots (1-q^{-1}/a)}
{(1-aq^{-dk})(1-aq^{d-dk})\cdots (1-aq^{-d})} \\[5pt]
&=\frac{(aq;q^d)_{m}}{(q^d/a;q^d)_{m}} \frac{(aq;q^d)_k}{(q^d/a;q^d)_k} a^{-2k} q^{(d-1)k},
\end{align*}
which together with \eqref{aqcong} establishes the assertion.
\end{proof}

Similarly, we have the following $q$-congruence.
\begin{lemma}\label{lem:2.2}
Let $d$, $m$ and $n$ be positive integers with  $m\leqslant n-1$ and $dm\equiv 1\pmod{n}$.
Then, for $0\leqslant k\leqslant m$, we have
\begin{equation*}
\frac{(aq^{-1};q^d)_{m-k}}{(q^d/a;q^d)_{m-k}}
\equiv (-a)^{m-2k}\frac{(aq^{-1};q^d)_k}{(q^d/a;q^d)_k}
q^{m(dm-d-2)/2+(d+1)k}\pmod{\Phi_n(q)}.
\end{equation*}
\end{lemma}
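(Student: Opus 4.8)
The plan is to follow the same two-stage strategy as in the proof of Lemma~\ref{lem:2.1}, the only structural difference being that the hypothesis $dm\equiv 1\pmod n$ now yields $q^{dm-1}\equiv 1\pmod{\Phi_n(q)}$ in place of the relation $q^{dm+1}\equiv 1$ exploited there. This single change of sign in the exponent is precisely what turns $m(dm-d+2)/2$ into $m(dm-d-2)/2$ and $(d-1)k$ into $(d+1)k$ in the conclusion.

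First I would settle the case $k=0$, i.e.\ evaluate the full quotient $(aq^{-1};q^d)_m/(q^d/a;q^d)_m$ modulo $\Phi_n(q)$. Since $q^n\equiv 1\pmod{\Phi_n(q)}$ and $dm\equiv 1\pmod n$, each denominator factor satisfies $1-q^{dj}/a\equiv 1-q^{dj-dm+1}/a$, so that
\[
(q^d/a;q^d)_m\equiv\prod_{j=0}^{m-1}\bigl(1-q^{1-dj}/a\bigr)\pmod{\Phi_n(q)}.
\]
Pairing these term by term with the numerator factors $1-aq^{dj-1}$ and using the elementary identity $1-q^{1-dj}/a=-a^{-1}q^{1-dj}(1-aq^{dj-1})$, every pair collapses to $-aq^{dj-1}$; multiplying over $0\le j\le m-1$ and summing the arithmetic progression $\sum_{j=0}^{m-1}(dj-1)$ yields
\[
\frac{(aq^{-1};q^d)_m}{(q^d/a;q^d)_m}\equiv(-a)^m q^{m(dm-d-2)/2}\pmod{\Phi_n(q)}.
\]

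Next I would treat general $k$ by factoring off the full quotient. Writing
\[
\frac{(aq^{-1};q^d)_{m-k}}{(q^d/a;q^d)_{m-k}}
=\frac{(aq^{-1};q^d)_m}{(q^d/a;q^d)_m}\,
\frac{(1-q^{d(m-k+1)}/a)\cdots(1-q^{dm}/a)}{(1-aq^{d(m-k)-1})\cdots(1-aq^{d(m-1)-1})},
\]
I would again reduce the finitely many correction factors modulo $\Phi_n(q)$ via $q^{dm-1}\equiv 1$, so that the numerator factors become $1-q^{1-di}/a$ for $0\le i\le k-1$ and the denominator factors become $1-aq^{-di}$ for $1\le i\le k$. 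Applying the same elementary identity in both products and collecting the powers of $q$ (now governed by $\sum_{i=0}^{k-1}(1-di)$) identifies the correction factor as $a^{-2k}q^{(d+1)k}(aq^{-1};q^d)_k/(q^d/a;q^d)_k$. Combining this with the $k=0$ evaluation and using $(-a)^m a^{-2k}=(-a)^{m-2k}$ gives the stated congruence.

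The argument is entirely elementary, and the only point requiring care is the $q$-exponent bookkeeping: one must carry the replacement of $q^{dm+1}\equiv 1$ (as in Lemma~\ref{lem:2.1}) by $q^{dm-1}\equiv 1$ consistently through both the full product and the correction factor, so that the two arithmetic series reproduce exactly the exponents $m(dm-d-2)/2$ and $(d+1)k$ claimed in the statement. I do not anticipate any genuine obstacle beyond this careful tracking of signs and powers.
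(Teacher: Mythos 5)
Your proposal is correct and is precisely the argument the paper intends: the paper omits the proof of Lemma~\ref{lem:2.2}, stating it is completely analogous to that of Lemma~\ref{lem:2.1}, and you have carried out exactly that analogy, replacing $q^{dm+1}\equiv 1$ by $q^{dm-1}\equiv 1 \pmod{\Phi_n(q)}$ in both the evaluation of the full quotient and the reduction of the correction factor. Your exponent bookkeeping checks out: $\sum_{j=0}^{m-1}(dj-1)=m(dm-d-2)/2$ for the $k=0$ case, and the two tail products combine to give $a^{-2k}q^{(d+1)k}$ exactly as claimed.
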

The proof of Lemma~\ref{lem:2.2} is completely analogous to that of
Lemma~\ref{lem:2.1} and thus omitted.

\begin{proof}[Proof of Theorem \ref{thm:2}]
Since $\gcd(d,n)=1$, there exists a positive integer $m\leqslant n-1$ such that
$dm\equiv -1\pmod{n}$.
By the $a=1$ case of Lemma~\ref{lem:2.1} one sees that,
for $0\leqslant k\leqslant m$,
the $k$-th and $(m-k)$-th terms on the left-hand side of \eqref{eq:first-1}
cancel each other modulo $\Phi_n(q)$, i.e.,
\begin{align*}
[2d(m-k)+1]
\frac{(q;q^d)_{m-k}^{2d}}{(q^d;q^d)_{m-k}^{2d}}q^{d(d-2)(m-k)}
\equiv -[2dk+1]
\frac{(q;q^d)_k^{2d}}{(q^d;q^d)_k^{2d}}q^{d(d-2)k}
\pmod{\Phi_n(q)}.
\end{align*}
This proves that
\begin{equation}
\sum_{k=0}^{m}[2dk+1]
\frac{(q;q^d)_k^{2d}}{(q^d;q^d)_k^{2d}}q^{d(d-2)k} \equiv 0 \pmod{\Phi_n(q)}.  \label{eq:symmetry-1}
\end{equation}
Moreover, since $dm\equiv -1\pmod{n}$, the expression $(q;q^d)_k$ contains a factor of the form $1-q^{\alpha n}$ for $m< k\leqslant n-1$, and is therefore
congruent to $0$ modulo $\Phi_n(q)$. At the same time the expression $(q^d;q^d)_k$ is relatively prime to $\Phi_n(q)$ for $m< k\leqslant n-1$.
Therefore, each summand in \eqref{eq:first-1} with $k$ in the range $m< k\leqslant n-1$ is congruent to $0$ modulo $\Phi_n(q)$.
This together with \eqref{eq:symmetry-1} establishes the $q$-congruence \eqref{eq:first-1}.

Similarly, we can use Lemma~\ref{lem:2.2} to prove \eqref{eq:first-2}.
The proof of the theorem is complete.
\end{proof}

\section{Proof of Conjectures~\ref{conj-1} and \ref{conj-2}}\label{sec:conjectures}
As mentioned in the introduction, we only need to show that Conjectures~\ref{conj-1} and \ref{conj-2} are also true modulo $[n]$.
We first give a detailed proof of the $q$-congruences modulo $[n]$ in Conjecture~\ref{conj-1}.
\begin{proof}[Proof of Conjecture~\ref{conj-1}]
We need to show that
\begin{align}
\sum_{k=0}^{((d-1)n-1)/d}[2dk+1]
\frac{(q;q^d)_k^{2d}}{(q^d;q^d)_k^{2d}}q^{d(d-2)k} \equiv 0 \pmod{[n]},  \label{conj1-a}\\
\intertext{and}
\sum_{k=0}^{n-1}[2dk+1]
\frac{(q;q^d)_k^{2d}}{(q^d;q^d)_k^{2d}}q^{d(d-2)k} \equiv 0 \pmod{[n]}.  \label{conj1-b}
\end{align}

Let $\zeta\ne1$ be an $n$-th root of unity, not
necessarily primitive. Clearly, $\zeta$ is a primitive root of unity
of degree $s$ with $s\mid n$ and $s>1$.  Let $c_q(k)$ denote the $k$-th term on the
left-hand side of \eqref{conj1-a} or \eqref{conj1-b}, i.e.,
\begin{equation*}
c_q(k)=[2dk+1]
\frac{(q;q^d)_k^{2d}}{(q^d;q^d)_k^{2d}}q^{d(d-2)k}.
\end{equation*}
The $q$-congruences \eqref{eq:symmetry-1} and \eqref{eq:first-1} with
$n\mapsto s$ imply that
\begin{equation*}
\sum_{k=0}^{m}c_\zeta(k)=\sum_{k=0}^{s-1}c_\zeta(k)=0,
\end{equation*}
where $dm\equiv -1\pmod{s}$ and $1\leqslant m\leqslant s-1$.
Observing that
\begin{equation}
\frac{c_\zeta(\ell s+k)}{c_\zeta(\ell s)}
=\lim_{q\to\zeta}\frac{c_q(\ell s+k)}{c_q(\ell s)}
=c_\zeta(k),  \label{eq:conj1-aa}
\end{equation}
we have
\begin{equation}
\sum_{k=0}^{n-1}c_\zeta(k)=\sum_{\ell=0}^{n/s-1}
\sum_{k=0}^{s-1}c_\zeta(\ell s+k)
=\sum_{\ell=0}^{n/s-1}c_\zeta(\ell s) \sum_{k=0}^{s-1}c_\zeta(k)=0,  \label{eq:conj1-bb}
\end{equation}
and
\begin{equation*}
\sum_{k=0}^{((d-1)n-1)/d}c_\zeta(k)
=\sum_{\ell=0}^{N-1} c_\zeta(\ell s)
\sum_{k=0}^{s-1}c_\zeta(k)+c_\zeta(N s)\sum_{k=0}^{m}c_\zeta(k)=0,
\end{equation*}
where
$$
N=\frac{(d-1)n-dm-1}{ds}.
$$
(It is easy to check that $N$ is a positive integer.)
This means that the sums $\sum_{k=0}^{n-1}c_q(k)$ and $\sum_{k=0}^{((d-1)n-1)/d}c_q(k)$
are both divisible by the cyclotomic polynomial $\Phi_s(q)$.
Since this is true for any divisor $s>1$ of $n$, we deduce that they
are divisible by
\begin{equation*}
\prod_{s\mid n,\, s>1}\Phi_s(q)=[n],
\end{equation*}
thus establishing the $q$-congruences \eqref{conj1-a} and \eqref{conj1-b}.
\end{proof}

Similarly, we can prove Conjecture~\ref{conj-2}.
\begin{proof}[Proof of Conjecture~\ref{conj-2}]
This time we need to show that
\begin{align}
\sum_{k=0}^{((d-1)n+1)/d}
[2dk-1]\frac{(q^{-1};q^d)_k^{2d}}{(q^d;q^d)_k^{2d}}q^{d^2 k}  \equiv 0 \pmod{[n]},  \label{conj2-a}\\
\intertext{and}
\sum_{k=0}^{n-1}
[2dk-1]\frac{(q^{-1};q^d)_k^{2d}}{(q^d;q^d)_k^{2d}}q^{d^2 k}  \equiv 0 \pmod{[n]}.  \label{conj2-b}
\end{align}
Again, let $\zeta$ be a primitive root of unity
of degree $s$ with $s\mid n$ and $s>1$, and let
\begin{equation*}
c_q(k)=
[2dk-1]\frac{(q^{-1};q^d)_k^{2d}}{(q^d;q^d)_k^{2d}}q^{d^2 k}.
\end{equation*}
Just like before, we have
\begin{equation*}
\sum_{k=0}^{m}c_\zeta(k)=\sum_{k=0}^{s-1}c_\zeta(k)=0,
\end{equation*}
where $dm\equiv 1\pmod{s}$ and $1\leqslant m\leqslant s-1$.
Furthermore, we also have \eqref{eq:conj1-aa}, \eqref{eq:conj1-bb}, and
\begin{equation*}
\sum_{k=0}^{((d-1)n+1)/d}c_\zeta(k)
=\sum_{\ell=0}^{N-1} c_\zeta(\ell s)
\sum_{k=0}^{s-1}c_\zeta(k)+c_\zeta(N s)\sum_{k=0}^{m}c_\zeta(k)=0,
\end{equation*}
where
$
N=\frac{(d-1)n-dm+1}{ds}
$
this time. The rest is exactly the same as in the proof of Conjecture~\ref{conj-1} and
is omitted here.
\end{proof}

\section{An open problem}\label{sec:final}

Recently, the first author \cite[Theorem 5.4]{Guo-m3} proved that
\begin{align}
\sum_{k=0}^{M}[4k-1]_{q^2}[4k-1]^2\frac{(q^{-2};q^4)_k^4}{(q^4;q^4)_k^4}q^{4k}
\equiv 0 \pmod{[n]_{q^2}\Phi_n(q^2)^2},\label{last-1}
\end{align}
where $n$ is odd and $M=(n+1)/2$ or $n-1$. We take this opportunity to
propose a unified generalization of \cite[Conjectures 6.3 and 6.4]{Guo-m3},
involving a remarkable $q$-hypergeometric congruence modulo the fifth
power of a cyclotomic polynomial:
\begin{conjecture}
Let $n>1$ be an odd integer. Then
\begin{align*}
\sum_{k=0}^{M}[4k-1]_{q^2}[4k-1]^2\frac{(q^{-2};q^4)_k^4}{(q^4;q^4)_k^4}q^{4k}
\equiv (2q+2q^{-1}-1)[n]_{q^2}^4 \pmod{[n]_{q^2}^4\Phi_n(q^2)},
\end{align*}
where $M=(n+1)/2$ or $n-1$.
\end{conjecture}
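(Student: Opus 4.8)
The plan is to exploit that the modulus factors as
\begin{equation*}
[n]_{q^2}^4\Phi_n(q^2)=\Phi_n(q^2)^5\prod_{\substack{s\mid n\\ 1<s<n}}\Phi_s(q^2)^4,
\end{equation*}
a product of pairwise coprime prime powers, and that the right-hand side $(2q+2q^{-1}-1)[n]_{q^2}^4$ is divisible by $(1-q^{2n})^4$ (since $[n]_{q^2}^4=(1-q^{2n})^4/(1-q^2)^4$) and hence vanishes modulo $\Phi_s(q^2)^4$ for \emph{every} divisor $s>1$ of $n$. Consequently the conjecture splits into two tasks of very different difficulty: (i) a ``fourth power at all indices'' statement, namely that the truncated sum is itself divisible by $(1-q^{2n})^4$, which at one stroke settles the congruence modulo every $\Phi_s(q^2)^4$ with $1<s<n$ as well as modulo $\Phi_n(q^2)^4$; and (ii) the genuinely new ``one extra power plus the exact constant'' statement modulo $\Phi_n(q^2)^5$ at the top index. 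I would first dispose of the choice of $M$: for $(n+3)/2\le k\le n-1$ the factor $(q^{-2};q^4)_k^4$ already carries $\Phi_n(q^2)^4$ (and a high power of each $\Phi_s(q^2)$, $s<n$), so the two truncations agree modulo everything except possibly the top fifth power, which I treat at the very end.

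For task (i) I would use creative microscoping. Introduce two parameters $a,b$ and consider
\begin{equation*}
S_n(a,b)=\sum_{k=0}^{(n+1)/2}[4k-1]_{q^2}[4k-1]^2
\frac{(aq^{-2},q^{-2}/a,bq^{-2},q^{-2}/b;q^4)_k}{(aq^4,q^4/a,bq^4,q^4/b;q^4)_k}\,q^{4k},
\end{equation*}
which returns the left-hand side at $a=b=1$. The aim is to prove that $S_n(a,b)$ vanishes whenever $a$ or $b$ equals $q^{\pm2n}$, so that $S_n(a,b)$ is divisible by $(1-aq^{2n})(a-q^{2n})(1-bq^{2n})(b-q^{2n})$; specialising $a=b=1$ then factors the left-hand side through $(1-q^{2n})^4$, which is exactly task (i). Each such vanishing should follow, as in the proof of Theorem~\ref{thm:1}, by recognising the appropriately specialised series as a terminating very-well-poised sum and annihilating it through Andrews' transformation \eqref{andrews} together with the terminating Gasper summation \eqref{eq:vwp-km}; the parameters $aq^{-2},\dots,q^{-2}/b$ are arranged to pair off with $aq^4,\dots,q^4/b$ in the well-poised manner with respect to the base $q^4$ and the very-well-poised quantity $q^{-2}$ carried by $[4k-1]_{q^2}$. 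This step is, in effect, the analogue of Theorem~\ref{thm:1} for the present family, sharpening \eqref{last-1} from the third to the fourth power.

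For task (ii) I would push the expansion one order further. Writing the left-hand side as $\Phi_n(q^2)^4L(q)$ and the right-hand side as $\Phi_n(q^2)^4R(q)$, with $L,R$ having denominators coprime to $\Phi_n(q^2)$, it remains to check $L\equiv R\pmod{\Phi_n(q^2)}$, that is, to identify the coefficient of $\Phi_n(q^2)^4$ in the $\Phi_n(q^2)$-adic expansion of the sum and to match it against $(2q+2q^{-1}-1)\prod_{1<s<n}\Phi_s(q^2)^4$ at the roots of $\Phi_n(q^2)$. The natural way to access this leading coefficient is to specialise one microscoping parameter so that Gasper's summation \eqref{eq:gasper} evaluates the remaining sum in closed product form, and then to expand that product modulo $\Phi_n(q^2)^5$; alternatively one differentiates the parametric congruence of task (i) at $a=b=1$. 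Either route should express $L$ at the roots of $\Phi_n(q^2)$ through elementary $q$-factorials (reducible by Lemma~\ref{lem:mod-square} and its higher-order refinements), from which the Laurent polynomial $2q+2q^{-1}-1$ has to emerge.

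The step I expect to be the real obstacle is making tasks (i) and (ii) work in the presence of the weight $[4k-1]^2$, which is \emph{not} part of the very-well-poised structure: neither Andrews' transformation \eqref{andrews} nor Gasper's summation \eqref{eq:gasper} applies to $S_n(a,b)$ as it stands. I would expand $[4k-1]_{q^2}[4k-1]^2$ as a Laurent polynomial in $q^{4k}$ and split $S_n(a,b)$ into a handful of genuinely (or nearly) very-well-poised pieces obtained by shifting the argument $q^{4k}$, treat each piece by the machinery above, and then reassemble. The delicate point is that after this splitting the separate contributions must cancel so as to leave \emph{exactly} the constant $2q+2q^{-1}-1$ and not some neighbouring Laurent polynomial; controlling these cancellations, and likewise showing that the tail $\sum_{(n+3)/2\le k\le n-1}$ cancels to the full fifth power so that both admissible values of $M$ give the same residue, is where the main effort will be concentrated.
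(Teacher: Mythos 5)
You are attempting to prove a statement that the paper does not prove at all: it is posed in Section~\ref{sec:final} as an open problem, explicitly offered as a unified generalization of Conjectures 6.3 and 6.4 of \cite{Guo-m3}, i.e.\ as something beyond the reach of the methods developed in the paper. So there is no paper proof to compare against, and your proposal must stand on its own as a complete argument --- which it does not. It is a research plan whose two central tasks are both left unproved, and the decisive obstruction is one you name yourself: the weight $[4k-1]_{q^2}[4k-1]^2$ carries the extra square $[4k-1]^2$, which is not part of any very-well-poised structure, so neither Andrews' transformation \eqref{andrews} nor Gasper's summation \eqref{eq:gasper}/\eqref{eq:vwp-km} applies to your parametrized sum $S_n(a,b)$. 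That is precisely why the engine behind Theorem~\ref{thm:1} is unavailable here, and your proposed remedy --- expanding the weight as a Laurent polynomial in $q^{4k}$ and treating shifted pieces separately --- does not repair it: replacing the argument $q^{4k}$ by $q^{4k+j}$ destroys the very-well-poised balance of each piece, so the individual pieces are again not transformable by \eqref{andrews}, and you give no mechanism by which their contributions would recombine to yield exactly the constant $2q+2q^{-1}-1$ modulo $\Phi_n(q^2)^5$. Note also that your task (i) alone (fourth-power divisibility of the truncated sum) is already a strengthening of \eqref{last-1} and is essentially the content of the still-open conjectures in \cite{Guo-m3}; it cannot be treated as a routine adaptation.

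Two further points. First, your opening divisibility claim is wrong as stated: since $[n]_{q^2}^4=(1-q^{2n})^4/(1-q^2)^4$, the right-hand side $(2q+2q^{-1}-1)[n]_{q^2}^4$ is divisible by $[n]_{q^2}^4$ but \emph{not} by $(1-q^{2n})^4$; what you actually need --- that it vanishes modulo $\Phi_s(q^2)^4$ for every divisor $s>1$ of $n$ --- is true, but should be argued directly from the factorization $[n]_{q^2}=\prod_{s\mid n,\,s>1}\Phi_s(q^2)$. Symmetrically, in task (i) proving that the sum is divisible by $(1-q^{2n})^4$ is strictly stronger than divisibility by $[n]_{q^2}^4$, and the usual creative-microscoping limit $a,b\to1$ only delivers the latter after a careful discussion of denominators at roots of unity, which you omit. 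Second, even granting tasks (i) and the tail estimate, your treatment of task (ii) (``the Laurent polynomial $2q+2q^{-1}-1$ has to emerge'') is a statement of hope, not a proof: no computation is indicated that pins down the coefficient of $\Phi_n(q^2)^4$ in the expansion of the sum. In short, the skeleton of your reduction (splitting the modulus into $\Phi_n(q^2)^5$ and the lower cyclotomic factors, and handling the two truncations via the vanishing of the tail) is sensible and would be the natural frame for an eventual proof, but every load-bearing step is missing, and the statement remains, as in the paper, a conjecture.
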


\end{document}